\newtheorem{thm}{Theorem}[section]
\newtheorem{prop}[thm]{Proposition}
\newtheorem{rem}[thm]{Remark}
\newtheorem{exam}[thm]{Example}
\newcommand{\R}{{\mathbb{R}}}
\newcommand{\T}{{\mathbb{T}}}
\newcommand{\Z}{{\mathbb{Z}}}
\def\id{{1\hskip-2.5pt{\rm l}}}
\newcommand{\Ham}{{\hbox{\it Ham\,}}}
\newcommand{\Symp}{{\hbox{\it Symp} }}
\DeclareMathOperator{\sgrad}{sgrad}
\DeclareMathOperator{\Flux}{Flux}
\begin{document}

\title{Symplectic intersections and invariant measures}

\renewcommand{\thefootnote}{\alph{footnote}}

\author{\textsc Leonid
Polterovich }

\footnotetext[1]{Partially supported by an ERC advanced grant and by an ISF grant.}

\date{\today}

\maketitle

We present a method, based on symplectic topology, which enables us to detect invariant measures with ``large" rotation vectors for a class of Hamiltonian flows. The method is robust with respect to $C^0$-perturbations of the Hamiltonian. Our results (Theorem \ref{thm-sympint} and Theorem \ref{thm-sympint-1})
are applicable even in the absence of homologically non-trivial closed orbits of the flow,
see Section \ref{sec-disc} for a discussion and an example.

The first three sections of the note deal with the case of autonomous Hamiltonians where
the formulations and the approach are more transparent. The story starts (Section \ref{sec-1})
with a ``rigid" configuration of subsets of a symplectic manifold provided by symplectic intersections theory.
In Section \ref{sec-pb} the geometric setup is reformulated in the language of function theory on symplectic manifolds by using a version of Poisson bracket invariants introduced in \cite{BEP}. With this reformulation,
the main result is proved by an elementary ergodic argument.

The last section contains a generalization to the case of non-autonomous Hamiltonians.

\section{From symplectic intersections to invariant measures}\label{sec-1}
Let $M$ be a closed manifold, and $v$ be a smooth vector field on $M$ generating a flow $\phi_t$.
For an invariant Borel probability measure $\mu$ of $\phi_t$ define its {\it rotation vector} $\rho(\mu,v) \in H_1(M,\R)$
by $$\langle a, \rho(\mu,v)\rangle := \int_M \alpha(v) d\mu\;\; \forall a \in H^1(M,\R)\;,$$
where $\alpha$ is (any) closed 1-form representing $a$ (see \cite{Schw}).

Suppose now that $(M,\omega)$ is a closed symplectic manifold. Denote by $\Symp_0(M,\omega)$ the identity
component of the symplectomorphism group of $M$. Given a path $\{\phi_t\}$, $t\in [0,1]$, $\phi_0=\id$ of symplectomorphisms, denote by $v_t$ the corresponding vector field. By the Cartan formula, $\lambda_t := i_{v_t}\omega$ is a closed $1$-form on $M$. The cohomology class $\int_0^1 [\lambda_t]$ is called {\it the flux} of the path $\{\phi_t\}$ and is denoted by $\Flux (\{\phi_t\})$. The flux does not change under a homotopy of $\{\phi_t\}$ with fixed endpoints \cite{MS}. A diffeomorphism $\theta \in Symp_0(M,\omega)$ is called Hamiltonian if it can be represented as the time one map of a path with vanishing flux.
Hamiltonian diffeomorphisms form a group denoted by $\Ham(M,\omega)$.

Our main result involves a pair of compact subsets $X,Y \subset M$ with the following properties:
\begin{itemize} \item[{(P1)}] $Y$ cannot be displaced from $X$ by any Hamiltonian diffeomorphism:
$\theta(Y) \cap X \neq \emptyset$ for every $\theta \in \Ham(M,\omega)$;
\item[{(P2)}] There exists a path $\{\psi_t\}$, $t\in [0,1]$,  $\psi_0=\id$ of symplectomorphisms
so that $\psi_1$ displaces $Y$ from $X$: $\psi_1 (Y) \cap X =\emptyset$.
\end{itemize}

\noindent
Put $X':= \psi_1(Y), a:= \Flux(\{\psi_t\})$.

\medskip
\noindent
\begin{thm}\label{thm-sympint}
For every  $F\in C^\infty(M)$ with
\begin{equation}\label{eq-ineq-main}
F|_{X} \leq 0,\;\;F|_{X'} \geq 1
\end{equation}
the Hamiltonian flow $\{\phi_t\}$ of $F$ possesses an invariant measure $\mu$ with
\begin{equation}\label{eq-mainineq}
|\langle a, \rho(\mu,\sgrad F)\rangle| \geq 1 \;.
\end{equation}
\end{thm}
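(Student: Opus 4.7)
The plan is to combine property (P1) with a Krylov--Bogolyubov averaging anchored at well-chosen witness points, and to read off the rotation vector bound from the $\psi$-displacement of $F$.

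\medskip
\noindent\textbf{Step 1 (witnesses).} For each $T>0$, the time-$T$ map $\phi_T$ lies in $\Ham(M,\omega)$, so by (P1) there exists $y_T\in Y$ with $\phi_T(y_T)\in X$. Since $F$ is a first integral of its own flow, $F(y_T)=F(\phi_T(y_T))\leq 0$; since $\psi_1(y_T)\in X'$, we have $F(\psi_1(y_T))\geq 1$. Hence, uniformly in $T$,
\[
F(\psi_1(y_T))-F(y_T)\geq 1.
\]
Using the identity $\tfrac{d}{dt}F\circ\psi_t = dF(v_t)\circ\psi_t = \pm\lambda_t(\sgrad F)\circ\psi_t$ and integrating in $t\in[0,1]$, this rewrites as a uniform-in-$T$ lower bound, of magnitude $1$, for the integral of $\lambda_t(\sgrad F)$ along the $\psi$-trajectory through $y_T$.

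\medskip
\noindent\textbf{Step 2 (averaging).} Form the empirical measures
\[
\mu_T:=\frac{1}{T}\int_0^T (\phi_s)_\ast\delta_{y_T}\,ds
\]
and extract a weak-$\ast$ convergent subsequence $\mu_{T_n}\to\mu$; the standard estimate $\|(\phi_t)_\ast\mu_T-\mu_T\|_{\mathrm{var}}=O(t/T)$ shows that $\mu$ is $\phi$-invariant. The pairing $\langle a,\rho(\mu,\sgrad F)\rangle=\int_M\alpha(\sgrad F)\,d\mu$ is independent of the choice of representative $\alpha$ of $a$, since replacing $\alpha$ by $\alpha+df$ changes the integrand by the Poisson bracket $\{F,f\}$, whose integral against any $\phi$-invariant measure vanishes. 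Taking $\alpha:=\int_0^1\lambda_t\,dt$ turns the pairing into
\[
\int_M\int_0^1\lambda_t(\sgrad F)(p)\,dt\,d\mu(p).
\]

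\medskip
\noindent\textbf{Main obstacle.} What remains is to match this $\phi$-averaged quantity against the $\psi$-path inequality of Step 1. The integrand is the same, but the inequality in Step 1 is evaluated along the $\psi$-path through the witness $y_T$, whereas the pairing in Step 2 averages along the $\phi$-orbits of the $y_T$'s and then against the weak limit $\mu$; these two averages are not directly comparable because $\phi_s$ and $\psi_t$ do not commute. I expect the bridge to be the Poisson bracket reformulation promised in Section~\ref{sec-pb}: writing $v_t=\sgrad K_t$ locally we have $\lambda_t(\sgrad F)=\pm\{K_t,F\}$, and the coboundary identity $\int\{F,f\}\,d\mu=0$ provides exactly the freedom needed to absorb the $\phi$--$\psi$ non-commutativity into terms that vanish against $\mu$. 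Pushing this through --- presumably via a Fubini-type interchange that trades $\psi$-path averages for $\phi$-time averages modulo coboundaries killed by invariance --- yields $|\int_M\alpha(\sgrad F)\,d\mu|\geq 1$. This bridging step is the technical heart of the proof and the only place where the full strength of the Poisson bracket machinery from \cite{BEP} is used.
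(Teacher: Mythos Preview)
Your outline correctly identifies the ingredients but leaves a genuine gap at what you call the ``Main obstacle''. The coboundary identity $\int_M\{F,f\}\,d\mu=0$ only shows that the pairing $\langle a,\rho(\mu,\sgrad F)\rangle$ is independent of the representative of $a$; it does \emph{not} convert your $\psi$-path inequality
\[
\int_0^1\lambda_t(\sgrad F)(\psi_t y_T)\,dt\;\geq\;1
\]
into a bound on $\int_M\alpha(\sgrad F)\,d\mu_T$. The left-hand side is a number attached to the single point $y_T$ and its $\psi$-trajectory; the right-hand side is an average along the $\phi$-orbit of $y_T$. Their difference is not of the form $\int\{F,f\}\,d\mu_T$ for any $f$, so there is no coboundary to absorb it. As written, Step~1 produces an inequality that never sees the $\phi$-flow (except through the mere choice of the point $y_T$) and hence cannot feed into Step~2.

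The paper sidesteps this mismatch by applying (P1) to a different Hamiltonian diffeomorphism. One first proves (Proposition~\ref{prop-exammain}) that for \emph{every} closed $1$-form $\alpha$ in the class $a$, the time-one map $\theta_1$ of its locally Hamiltonian flow satisfies $\theta_1(X)\cap X'\neq\emptyset$ (because $\psi_1^{-1}\theta_1\in\Ham$), and hence $\|\{F,\alpha\}\|\geq 1$. The key trick is then to apply this not to a fixed $\alpha$ but to the $\phi$-averaged form
\[
\alpha_T:=\frac{1}{T}\int_0^T\phi_t^*\alpha\,dt,
\]
which is still closed and still represents $a$. Since $\phi_t$ preserves $\sgrad F$, one has $\{F,\alpha_T\}(x)=\int_M\alpha(\sgrad F)\,d\mu_{x,T}$, so the bound $\|\{F,\alpha_T\}\|\geq 1$ produces, for each $T$, a witness $x_T$ with $\bigl|\int\alpha(\sgrad F)\,d\mu_{x_T,T}\bigr|\geq 1$, and a weak limit finishes the argument. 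The ``Poisson bracket machinery'' you anticipate is precisely this averaging-of-the-form trick---no Fubini-type interchange or further input from \cite{BEP} is needed.
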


\medskip
\noindent
Theorem \ref{thm-sympint} extends with minor modifications
to certain non-compact symplectic manifolds, see Remark \ref{rem-noncomp} below.
An generalization to the case of non-autonomous Hamiltonian flows is given in Section \ref{sec-non-auton}.

\medskip
\noindent
Theorem \ref{thm-sympint} is deduced from a more general statement
involving so called Poisson bracket invariants in Section \ref{sec-pb}.

\medskip

In certain situations Property (P1) can be detected by methods
of ``hard" symplectic topology.

\medskip
\noindent
\begin{exam}\label{exam-1}{\rm
Let $X=Y=L$ be a Lagrangian torus in the standard symplectic torus
$(M=\T^{2n}=\R^{2n}/\Z^{2n}, \omega= dp \wedge dq)$ given by
\begin{equation}\label{eq-lag-1}
L:=\{p_1=...=p_n=0\}\;.
\end{equation}
The fact that $X$ is non-displaceable from itself is a basic particular case of Arnold's Lagrangian intersections conjecture proved in the 1980ies by various authors including Chaperon, Hofer, Laudenbach-Sikorav, Floer (see \cite{F} and references therein).
Let $\psi_t$ be the translation by $t/2$ in the direction of $p_1$-axis.
We see that $X' = \psi_1(Y)=L'$, where
\begin{equation}\label{eq-lag-2}
L':= \{p_1=1/2, p_2=...=p_n=0\}
\end{equation}
is disjoint from $X$. The flux $a$ of $\{\psi_t\}$, $t\in [0,1]$ equals
$\frac{1}{2} \cdot [dq_1] \in H^1(\T^{2n},\R)$. We conclude that the Hamiltonian flow of any Hamiltonian
$F$ on $\T^{2n}$ with  $F|_{X} \leq 0$, $F|_{X'} \geq 1$ possesses
an invariant measure $\mu$ with
$$|\langle [dq_1], \rho(\mu,\sgrad F)\rangle| \geq 2\;.$$
Furthermore, suppose that $n=1$ and fix $\epsilon >0$. Take a function $F$ of the form $F= u(p_1)$,
so that
$$ |dq_1(\sgrad F)|= |du/dq_1| \leq 2+\epsilon\;.$$
Thus every invariant measure $\mu$ of the corresponding Hamiltonian flow satisfies
$$|\langle [dq_1], \rho(\mu,\sgrad F)\rangle| \leq 2+\epsilon\;.$$ This shows that
the conclusion of Theorem \ref{thm-sympint} is sharp. }
\end{exam}

\medskip
\noindent
\begin{exam}\label{exam-2} {\rm The previous example can be generalized as follows.
We keep notations $L$ and $L'$ for the Lagrangian tori in $\T^{2n}$ given by \eqref{eq-lag-1}
and \eqref{eq-lag-2}.  Let $(N,\Omega)$ be a closed symplectic manifold. Let $A,B \subset N$ be a pair of compact subsets such that

\medskip
\noindent
{\bf $(\spadesuit)$} $B \times L$ cannot be displaced from $A \times L$ by a Hamiltonian
diffeomorphism of $(M = N \times \T^{2n}, \Omega \oplus dp \wedge dq)$.
\medskip
\noindent

Put $X=A \times L, Y= B \times L$
and observe that the translation by $t/2$ in the direction of $p_1$-axis sends $Y$ to
$X'= B \times L'$, so that $X \cap X' = \emptyset$. Thus $X$ and $Y$ satisfy Properties (P1),(P2).

Property $\spadesuit$ holds, for instance, when $A,B$ are Lagrangian submanifolds of $N$ with non-vanishing Floer homology: $HF(A,B) \neq 0$.

Another example of $\spadesuit$ is follows:  assume that $(N,\Omega)$ splits as
$$N= N_1 \times  ... \times N_k,\;\, \Omega=\Omega_1 \oplus ... \oplus \Omega_k$$
and $$A=B= C_1 \times ... \times C_k\;, $$
where $C_j$ is the codimension one skeleton of a sufficiently fine triangulation of $N_j$.
Observe that the sets $X$ and $Y$ in this case could be quite singular.
More generally, one can take $A=B$ to be a heavy subset of $N$, see \cite{EP}.
}
\end{exam}

\section{Invariant measures vs. periodic orbits}\label{sec-disc}
It is instructive to discuss Theorem \ref{thm-sympint} in the context of the following
informal principle which nowadays is confirmed in various situations by
tools of ``hard" symplectic topology: certain robust restrictions on the $C^0$-profile of the
Hamiltonian function may yield meaningful information about homologically non-trivial
closed orbits of the Hamiltonian
flow (see e.g. \cite{LG,BPS,Lee,W,Ni}). Such a restriction in Theorem \ref{thm-sympint} is given by
inequalities \eqref{eq-ineq-main}. Observe that every $T$-periodic orbit representing a non-trivial
homology class $b \in H_1(M,\Z)$ determines an invariant measure with the rotation vector $b/T$.
Thus it is natural to ask the following question: {\it Can one, under assumptions of Theorem \ref{thm-sympint},
deduce existence of a closed orbit of the Hamiltonian flow so that the corresponding rotation vector
satisfies inequality \eqref{eq-mainineq}? } As the next example shows, the answer is in general negative.

\medskip
\noindent
\begin{exam}\label{exam-noorb}{\rm Let $M$ be the symplectic torus
$\T^4=\R^4/\Z^4$ equipped with the symplectic form
$$\omega= dp_1\wedge dq_1 +\gamma dp_2 \wedge dq_1 + dp_2 \wedge dq_2\;,$$
where $\gamma$ is an {\it irrational} number. As in Example \ref{exam-1} above, consider
Lagrangian torus $L=\{p=0\}$ and its image $L'$ under the shift by $1/2$ in $p_1$-direction.
Take a Hamiltonian $F(p,q) = \sin^2 (\pi p_1)$, so that $F=0$ on $L$ and $F=1$ on $L'$.
Exactly as in the case of the standard symplectic form, Floer theory guarantees
that Properties (P1) and (P2) hold for $X=Y=L$ and $X'=L'$. Therefore  Theorem \ref{thm-sympint}
detects an invariant measure with non-vanishing rotation number of the Hamiltonian flow of $F$.

Of course, this measure can be seen explicitly. One readily checks that the Hamiltonian vector field of $F$ is parallel to $$\frac{\partial}{\partial q_1}-\gamma \cdot \frac{\partial}{\partial q_2}\;.$$
In particular, the restriction of the Hamiltonian flow to every invariant torus $\{p_1=c_1,p_2=c_2\}$, $c_1\neq 0,1/2$ carries a quasi-periodic
motion and possesses unique invariant measure with non-vanishing rotation number. The crux of this example
is that the only closed orbits of the flow are fixed points lying on hypersurfaces $\{p_1=0\}$ and $\{p_1=1/2\}$. In particular, the flow does not have homologically non-trivial closed orbits.
}
\end{exam}

\medskip
\noindent
The discussion is continued in Remark \ref{rem-Lalonde} below.

\section{Poisson bracket invariants}\label{sec-pb}
For a closed 1-form $\alpha$ on $M$ define its locally Hamiltonian vector field $\sgrad \alpha$
by $i_{\sgrad \alpha} \omega = \alpha$. With this notation, the Hamiltonian vector field of a function $F$
is $\sgrad F:= \sgrad (-dF)$, mind the minus sign. For a function $F$ and a closed $1$-form $\alpha$
their Poisson bracket is given by $$\{F,\alpha\}=dF(\sgrad \alpha) = \alpha(\sgrad F)\;.$$
The next definition is a variation on the theme of \cite{BEP}. We write $||F||$ for the uniform norm
$\max_M |F|$.

\medskip
\noindent \begin{def} \label{def-pb}{\rm Let $X$ and $X'$ be a pair of disjoint closed subsets of $M$.
For a cohomology class $a \in H^1(M,\R)$ put
$$pb^a(X,X'):= \inf ||\{F,\alpha\}||\;,$$
where the infimum is taken over all $F\in C^\infty(M)$ with $F|_{X} \leq 0$, $F|_{X'} \geq 1$ and
all closed 1-forms $\alpha$ representing $a$. }
\end{def}

\medskip

This invariant is non-trivial, for instance, in the following situation.
Let $X,Y \subset M$ be a pair of compact subsets satisfying
Properties (P1) and (P2) of Section \ref{sec-1}: $Y$ cannot be displaced from $X$ by a Hamiltonian
diffeomorphism, but, on the other hand, it can be displaced from $X$ by a symplectomorphism
$\psi_1$ which is the end-point of a symplectic path $\{\psi_t\}$ with the flux $a \in H^1(M,\R)$.
Put $X'=\psi_1(Y)$.

\begin{prop}\label{prop-exammain} Under above assumptions, $pb^a(X,X') \geq 1$.
\end{prop}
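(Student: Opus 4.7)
The plan is to deform $\psi_1$ into a Hamiltonian diffeomorphism by composing it with the time-one map of the symplectic flow of $\sgrad\alpha$, then invoke property (P1) to pin down a trajectory of $\sgrad\alpha$ that starts in $X$ and lands in $X'$ after time one, and finally integrate $F$ along that trajectory. To this end, fix an arbitrary $F\in C^\infty(M)$ with $F|_X\leq 0$, $F|_{X'}\geq 1$, and an arbitrary closed 1-form $\alpha$ representing $a$; let $\xi_t$ denote the flow of $\sgrad\alpha$, which exists for all $t\in\R$ since $M$ is closed.

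The technical heart of the argument is to show that $\theta := \xi_1^{-1}\circ\psi_1$ lies in $\Ham(M,\omega)$. I would verify this by concatenating two symplectic paths from $\id$ to $\theta$: first the path $s\mapsto \xi_{-s}$, $s\in[0,1]$, which is generated by $-\sgrad\alpha$ and therefore has flux $-a$; then the path $t\mapsto \xi_1^{-1}\circ\psi_t$, whose time-dependent generator is the pushforward $(\xi_1^{-1})_*v_t$, has $\omega$-dual $\xi_1^*\lambda_t$, and (since $\xi_1\in\Symp_0(M,\omega)$ acts trivially on $H^1(M,\R)$) contributes flux $a$. The total flux of the concatenated isotopy is $-a+a=0$, so $\theta$ is Hamiltonian. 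Applying (P1) to $\theta$ produces a point $y\in Y$ with $\theta(y)\in X$; setting $x := \theta(y) = \xi_1^{-1}(\psi_1(y))$ gives $x\in X$ and $\xi_1(x) = \psi_1(y)\in\psi_1(Y) = X'$, so the $\sgrad\alpha$-orbit of $x$ indeed travels from $X$ to $X'$ in unit time.

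Integrating $F$ along this orbit and using the identity $\{F,\alpha\} = dF(\sgrad\alpha)$ from the definition of the Poisson bracket, one obtains
\[
1 \;\leq\; F(\xi_1(x)) - F(x) \;=\; \int_0^1 \{F,\alpha\}(\xi_s(x))\,ds \;\leq\; \|\{F,\alpha\}\|.
\]
Since $F$ and $\alpha$ were arbitrary admissible choices, taking the infimum yields $pb^a(X,X')\geq 1$. The only delicate step is the flux bookkeeping in the middle paragraph; after that, (P1) and a one-line integration do all the work.
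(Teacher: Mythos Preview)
Your proof is correct and follows essentially the same route as the paper's: both show that a suitable composition of $\psi_1$ with the time-one map of the locally Hamiltonian flow of $\alpha$ is Hamiltonian, invoke (P1) to obtain a point $x\in X$ whose $\sgrad\alpha$-trajectory reaches $X'$ at time $1$, and finish by integrating $F$ along that trajectory. The only cosmetic difference is that the paper observes directly that the path $t\mapsto \psi_t^{-1}\xi_t$ has vanishing flux, whereas you verify the analogous fact for $\xi_1^{-1}\psi_1$ via a concatenated path.
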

\begin{proof} Take any closed $1$-form $\alpha$ representing $a$, and denote by $\theta_t$, $t \in [0,1]$
the corresponding locally Hamiltonian flow. Note that $\Flux(\{\psi_t^{-1}\theta_t\}) =0$, and
thus the diffeomorphism $\psi_1^{-1}\theta_1$ is Hamiltonian. Therefore
$\psi_1^{-1}\theta_1(X) \cap Y \neq \emptyset$, and thus $\theta_1(X) \cap X' \neq \emptyset$.
The latter yields existence of a point $x \in X$ such that $x':= \theta_1 x \in X'$. Given any
function $F\in C^\infty(M)$ with $F|_{X} \leq 0$, $F|_{X'} \geq 1$, we have that
$$1 \leq F(x')-F(x) = \int_0^1 \{F,\alpha\}(\theta_tx)\;dt\;.$$
Thus $||\{F,\alpha\}||\geq 1$, which proves the proposition.
\end{proof}

\medskip
\noindent
The main result of the present section is as follows.

\medskip
\noindent
\begin{thm} \label{thm-main}  Assume that $pb^{a}(X,X')=p >0$. Then
\begin{itemize} \item[{(i)}] For every  $F\in C^\infty(M)$ with $F|_{X} \leq 0$, $F|_{X'} \geq 1$,
the Hamiltonian flow $\{\phi_t\}$ of $F$ possesses an invariant measure $\mu$ with
\begin{equation}\label{eq-mainineq-1}
|\langle a, \rho(\mu,\sgrad F)\rangle| \geq p \;.
\end{equation}
\item[{(ii)}] For every closed $1$-form $\alpha$ representing $a$ the corresponding
locally Hamiltonian flow possesses a chord of time length $\leq 1/p$ joining $X$ and $X'$.
\end{itemize}
\end{thm}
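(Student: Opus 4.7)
For part (i), the plan is to convert the hypothesis $pb^{a}(X,X') \geq p$ into an invariant-measure statement via Hahn--Banach duality. Fix $F$ as in the hypothesis and a reference closed $1$-form $\alpha_{0}$ representing $a$; every other representative is $\alpha_{0} + dh$ for some $h \in C^\infty(M)$. Writing $v := \sgrad F$ and $g_{0} := \alpha_{0}(v)$, one has $\{F,\alpha_{0}+dh\} = g_{0} + dh(v)$, so the hypothesis becomes $\inf_{h}\,||g_{0} + dh(v)|| \geq p$. The key step is the ergodic identity
$$\inf_{h \in C^\infty(M)}\, ||g_{0} + dh(v)|| \;=\; \sup_{\mu}\, \Bigl|\int_{M} g_{0}\, d\mu\Bigr|,$$
with $\mu$ running over $\phi_{t}$-invariant Borel probability measures. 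The $\geq$ direction is immediate from invariance, since $\int dh(v)\, d\mu = 0$. For $\leq$, the LHS equals the $C(M)$-distance from $g_{0}$ to the closed subspace $V := \overline{\{dh(v):h\in C^\infty(M)\}}$; by Hahn--Banach this distance is the supremum of $|\nu(g_{0})|$ over unit-norm signed Radon measures annihilating $V$, which are precisely the invariant signed measures (Riesz plus the defining relation $\int L_v h\, d\nu = 0$). The Jordan decomposition $\nu = \nu_+ - \nu_-$ has invariant pieces (by uniqueness of Jordan under pushforward by a diffeomorphism), and this reduces the supremum to invariant probability measures. Weak-$\ast$ compactness then yields a $\mu$ attaining the sup, and $\int g_{0}\, d\mu = \langle a, \rho(\mu,v)\rangle$ follows from the definition of the rotation vector.

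For part (ii), I argue by contradiction: suppose the flow $\theta_{t}$ of $\sgrad \alpha$ admits no chord of time length $\leq 1/p$ joining $X$ and $X'$. By compactness there exists $T > 1/p$ with $\theta_{s}(X) \cap X' = \emptyset$ for all $s \in [-T,T]$. Define
$$K := \bigcup_{0 \leq t \leq T}\theta_{t}(X), \qquad K' := \bigcup_{0 \leq t \leq T}\theta_{t}(X'),$$
both compact. Then $K \cap K' = \emptyset$: an equality $\theta_{t}(x) = \theta_{t'}(x')$ with $x \in X$, $x' \in X'$, $t,t' \in [0,T]$ would give $\theta_{t-t'}(x) \in X'$ with $|t-t'| \leq T$, contradicting the assumption. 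Choose smooth $\chi \fcolon M \to [0,1]$ with $\chi \equiv 0$ near $K$ and $\chi \equiv 1$ near $K'$, and set
$$F(x) \;:=\; \frac{1}{T}\int_{0}^{T}\chi(\theta_{t}(x))\, dt.$$
Then $F|_{X} = 0$ and $F|_{X'} = 1$ by construction, and differentiating along the flow of $\alpha$ gives
$$\{F,\alpha\}(x) \;=\; \frac{d}{ds}\Big|_{s=0} F(\theta_{s}(x)) \;=\; \frac{\chi(\theta_{T}(x)) - \chi(x)}{T},$$
so $||\{F,\alpha\}|| \leq 1/T < p$. Since $(F,\alpha)$ is admissible, this contradicts $pb^{a}(X,X') \geq p$.

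The crux is part (i): carefully justifying the duality between the $L^\infty$-norm infimum and the invariant-measure supremum, particularly identifying the annihilator of $V$ with invariant signed Radon measures and reducing via Jordan decomposition to probability measures. Part (ii) is essentially routine once one uses the symmetric forbidden window $s \in [-T,T]$ (permitted by reading a chord ``joining $X$ and $X'$'' as an unoriented trajectory segment) to force $K \cap K' = \emptyset$; the telescoping cancellation in $\{F,\alpha\}$ is then automatic.
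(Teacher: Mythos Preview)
Your proofs of both parts are correct, but part (i) takes a genuinely different route from the paper.

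For part (i), the paper fixes $F$ and a representative $\alpha$, and then \emph{averages the $1$-form along the Hamiltonian flow}: setting $\alpha_T := \frac{1}{T}\int_0^T \phi_t^*\alpha\,dt$, one has $[\alpha_T]=a$ and
\[
\{F,\alpha_T\}(x)=\int \alpha(\sgrad F)\,d\mu_{x,T},
\]
where $\mu_{x,T}$ is the empirical measure along the orbit of $x$ up to time $T$. Since $(F,\alpha_T)$ is admissible, $\|\{F,\alpha_T\}\|\geq p$ gives a point $x_T$ where the orbit average is $\geq p$ in absolute value; a weak limit along $T_i\to\infty$ (Bogolyubov--Krylov) produces the invariant measure. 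Your approach instead freezes $F$ and recognises the infimum over $\alpha$ as the $C(M)$-distance from $\alpha_0(\sgrad F)$ to the closure of $\{L_{\sgrad F}h\}$, then invokes Hahn--Banach and the Jordan decomposition to identify the dual problem with a supremum over invariant probability measures. Both are short; the paper's averaging trick is more elementary and constructive, while your duality argument exposes the underlying min--max identity cleanly and explains why the bound is sharp. It is worth noting that the paper's acknowledgements indicate the original proof was based on Sullivan's paper \cite{Sul} before being simplified by the averaging trick --- your Hahn--Banach argument is very much in the Sullivan spirit, so you have essentially rediscovered the earlier version.

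For part (ii), the paper gives no argument and simply refers to \cite[Section 4.1]{BEP}. Your time-averaged cutoff construction is exactly the kind of argument used there, and the details (the symmetric window $[-T,T]$ forcing $K\cap K'=\emptyset$, and the telescoping in $\{F,\alpha\}$) are handled correctly. The unoriented reading of ``chord joining $X$ and $X'$'' is justified since $pb^a(X,X')=pb^a(X',X)$, as one sees by replacing $F$ with $1-F$.
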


\medskip
\noindent
In view of Proposition \ref{prop-exammain}, Theorem \ref{thm-sympint} immediately follows from Theorem \ref{thm-main}(i).

\medskip
\noindent
Part (ii) of Theorem \ref{thm-main} is proved exactly as in \cite[Section 4.1]{BEP}.

\medskip
\noindent
{\bf Proof of Theorem \ref{thm-main}(i):} Suppose that $pb^a(X,X')=p >0$.
Take any function $F\in C^\infty(M)$ with $F|_{X} \leq 0$, $F|_{X'} \geq 1$ and any $1$-form
$\alpha$ representing the class $a$. Denote by $\phi_t$ the Hamiltonian flow generated by $F$.
For a point $x \in M$ and a number $T >0$ denote by $\mu_{x,T}$ a probability measure on $M$ given
by
$$\int H d\mu_{x,T}:= \frac{1}{T} \cdot \int_0^T H(\phi_t x)dt\;\; \forall H \in C(M).$$
Observe, following the standard proof of the Bogolyubov-Krylov theorem (see e.g. Chapter 1, Section 8
of \cite{CFS}), that if a sequence $\mu_{x_i,T_i}$ with $T_i \to +\infty$ as $i \to +\infty$
weakly converges to a measure $\mu$, this measure is invariant under the flow $\phi_t$.

Following a suggestion by Michael Entov, apply now the averaging trick similar to the one used in \cite{BEP}.
For $T>0$ consider the averaged form
$$\alpha_T := \frac{1}{T} \cdot \int_0^T \phi_t^* \alpha\;dt\;.$$
Note that $\alpha_T$ is still a closed form in the class $a$.
Observe that
$$|\{F,\alpha_T\}(x)| = \Big{|}\int \alpha(\sgrad F) d\mu_{x,T}\Big{|}\;.$$
Since $pb^a(X,X') = p$, there exists a point $x_T \in M$ such that
\begin{equation}\label{eq-rotation-vsp}
\Big{|}\int \alpha(\sgrad F) d\mu_{x_T,T}\Big{|} \geq p\;.
\end{equation}
Compactness yields existence of a sequence $T_i \to +\infty$ so that
the sequence of measures $\mu_i := \mu_{x_{T_i},T_i}$ weakly converges to a measure $\mu$ on $M$ which,
by the above discussion, is invariant under the flow $\phi_t$. Using weak convergence and \eqref{eq-rotation-vsp} we get that
$$|\rho(\mu,\sgrad F)| = \lim_{i \to \infty} \Big{|}\int \alpha(\sgrad F) d\mu_i \Big{|} \geq p\;,$$
as required.
\qed

\bigskip
\noindent
We conclude this section with a couple of remarks.

\medskip
\noindent
\begin{rem}\label{rem-Lalonde} {\rm $\;$

\medskip
\noindent
{\bf \ref{rem-Lalonde}.1.}  It would be interesting to explore further examples of pairs $X,X' \subset M$ with
$pb^a(X,X') >0$ for some $a \in H^1(M,\R)$. A promising pool of such examples is given by
disjoint Lagrangian submanifolds $X$ and $X'$ which can be joined by pseudo-holomorphic annuli persisting
under Lagrangian (not necessarily Hamiltonian!) isotopies of $X$ and $X'$ keeping these submanifolds disjoint. In this case one can deduce positivity of $pb^a(X,X')$ by using a method of
\cite[Section 1.6]{BEP}.
The latter is based on the study of obstructions to deformations of the symplectic form $\omega$ given by $\omega_s = \omega +sdF \wedge \alpha$, where $F$ is a function which vanishes near $X$ and equals $1$ near $X'$, and $\alpha$ is a closed $1$-form representing $a$. The cohomology class $a$ is chosen in such a way that it does not vanish when evaluated on (any) boundary component of the annulus.

\medskip
\noindent
{\bf \ref{rem-Lalonde}.2.}
Let us mention that annuli with boundaries on $X$ and $X'$  satisfying a non-homogeneous Cauchy-Riemann equation play a crucial role in the Gatien-Lalonde approach to homologically non-trivial closed orbits of Hamiltonians satisfying inequalities \eqref{eq-ineq-main} (see \cite{LG,Lee}). This brings us to
a new facet of the discussion in Section \ref{sec-disc} on invariant measures vs. periodic orbits.
Assume that an appropriately defined relative Gromov-Witten invariant responsible for the count of pseudo-holomorphic annuli with boundaries on $X$ and $X'$ does not vanish. It sounds plausible that under this assumption one can establish existence of closed orbits with the rotation vector satisfying inequality \eqref{eq-mainineq-1} directly by the Gatien-Lalonde method.

\medskip
\noindent
{\bf \ref{rem-Lalonde}.3.}
 Let us mention finally that count of pseudo-holomorphic annuli appears in a number
of recent papers on the Fukaya category, see \cite{Abouzaid,Seidel}. It would be interesting to understand its applicability in our context.
}
\end{rem}

\medskip
\noindent
\medskip
\noindent
\begin{rem}\label{rem-noncomp} {\rm Theorem \ref{thm-sympint} extends with minor modifications
to certain non-compact symplectic manifolds. We start with
a compactly supported function $F$ on $M$ with $F|_X \leq 0$, $F|_{X'} \geq 1$, where $X$ and $X'$ are disjoint compact subsets. Fix a closed $1$-form $\alpha$ representing a cohomology class $a \in H^1(M,\R)$.
Assume that the locally Hamiltonian flow $\psi_t$ of $\alpha$ is well defined for all times.
Suppose now that $\psi_1(X)$ cannot be displaced from $X'$ by any Hamiltonian diffeomorphism.
Arguing exactly as in the compact case, we get that the Hamiltonian flow of $F$
possesses an invariant measure $\mu$ with compact support whose rotation vector satisfies
$$|\langle a, \rho(\mu,\sgrad F) \rangle| \geq 1\;.$$
A meaningful example is given by $M=T^*\T^n$ equipped with canonical coordinates $(p,q)$ and
the standard symplectic form $dp \wedge dq$, $X$ -- the zero section, $X'$--the Lagrangian torus
$\{p=v\}$ with $v \neq 0$ and  $\alpha = vdq$. Let us mention that for fiber-wise convex Hamiltonians
on cotangent bundles invariant measures with non-vanishing rotation vectors have been studied
in the framework of Aubry-Mather theory \cite{Mather}.
}
\end{rem}

\section{Non-autonomous Hamiltonian flows}\label{sec-non-auton}
In this section we present a generalization of Theorem \ref{thm-sympint} to general, not necessarily
autonomous, Hamiltonian diffeomorphisms. Let $M$ be a connected symplectic manifold (either open or closed).
Let $F: M \times S^1 \to \R$  a compactly supported Hamiltonian function which is time-periodic
with period $1$. Denote by $\phi_t$ the corresponding Hamiltonian flow. The time periodicity of $F$
yields $\phi_{t+1} = \phi_t \phi$, where $\phi=\phi_1$ is the time one map of the flow.
In what follows we write $F_t(x) = F(x,t)$.

For an invariant compactly supported Borel probability measure $\mu$ of  $\phi$ the {\it rotation vector} $\rho(\mu,\phi)$ is a compactly supported homology class in $H_{1,c}(M,\R)$
defined by
$$\langle a, \rho(\mu,\phi)\rangle := \int_0^1  \int_M  \langle \alpha,\sgrad F_t \rangle (\phi_tx) d\mu(x)dt$$ $$ =\int_M \left( \int_{\gamma_x}\alpha \right) \;d\mu(x) \;\; \forall a \in H^1(M,\R)\;,$$
where $\alpha$ is (any) closed 1-form representing $a$ and $\gamma_x$ stands for the trajectory $\{\phi_t x\}, t \in [0,1]$.

We claim that the rotation vector $\rho(\mu,\phi)$ depends
only on the time one map $\phi$ but not on the specific Hamiltonian $F$ generating $\phi$.
Indeed, it is a standard fact of Floer theory that if $F'$ is another Hamiltonian such that the corresponding Hamiltonian flow $\phi'_t$ satisfies $\phi'_1 =\phi$, the orbits $\gamma_x= \{\phi_tx\}$ and $\gamma'_x=\{\phi'_tx\}$, $t \in [0,1]$ are homotopic with fixed end points for every $x \in M$ and hence
$$\int_{\gamma_x}\alpha = \int_{\gamma'_x} \alpha\;.$$

Let us note that when the Hamiltonian $F$ is autonomous, $\rho(\mu,\phi)$ coincides
with $\rho(\mu, \sgrad F)$ as defined in Section \ref{sec-1}.

Consider the extended phase space $$(N,\Omega):= (M \times T^*S^1, \omega + dr \wedge ds)\;,$$
where $r$ and $s(\text{mod}\; 1)$ are canonical coordinates on $T^*S^1$. For a set
$X \subset M$ define its {\it stabilization}
$$\text{stab}(X):= X \times \{r=0\} \subset N\;.$$

The next result involves a pair of compact subsets $X,Y \subset M$
which satisfy the following properties (cf. properties (P1) and (P2) in Section \ref{sec-1}):

\begin{itemize}
\item[{(Q1)}] $\text{stab}(Y)$ cannot be displaced from $\text{stab}(X)$ by any Hamiltonian diffeomorphism of $N$.
\item[{(Q2)}] There exists a closed $1$-form $\alpha$ on $M$ whose locally Hamiltonian flow
$\{\psi_t\}$ is defined for all $t \in \R$ (this is a non-trivial assumption in
the case when $M$ is non-compact) so that $\psi_1$ displaces $Y$ from $X$.
\end{itemize}

\noindent
Put $X':= \psi_1(Y)$. Let $a$ be the cohomology class of $\alpha$.

\medskip
\noindent
\begin{thm}\label{thm-sympint-1}
For every  $F\in C^\infty(M \times S^1)$ with
$$F_t|_{X} \leq 0,\;\;F_t|_{X'} \geq 1 \;\; \forall t \in \R$$
the time one map $\phi$ of the Hamiltonian flow $\{\phi_t\}$ generated by $F$
possesses an invariant measure $\mu$ with
$$
|\langle a, \rho(\mu,\phi)\rangle| \geq 1 \;.
$$
\end{thm}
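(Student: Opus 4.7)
The plan is to reduce Theorem \ref{thm-sympint-1} to the autonomous Theorem \ref{thm-main}(i) by passing to the extended phase space $(N, \Omega) = (M \times T^*S^1, \omega + dr \wedge ds)$ and encoding the time-periodic Hamiltonian via the autonomous function $K(x, r, s) = F(x, s) + r$. A direct computation gives $\sgrad K = \sgrad F_s + \partial_s - (\partial_s F)\partial_r$, so the flow $\Phi_t$ of $K$ is the symplectic suspension of $\phi_t$: for a starting time $s = 0$ one has $\Phi_t(x, r, 0) = (\phi_t(x), r_t, t \bmod 1)$. In particular the level set $\Sigma := \{K = 0\}$ is invariant, diffeomorphic to $M \times S^1$ via $(x, s) \mapsto (x, -F(x,s), s)$, and $\Phi_t|_\Sigma$ is conjugate to the standard suspension of $\phi_t$. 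Consequently, $\Phi_t$-invariant probabilities supported on $\Sigma$ correspond, after disintegration along $S^1$, to $\phi$-invariant probabilities on $M$.

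Next I would translate (Q1)--(Q2) into hypotheses (P1)--(P2) of Section \ref{sec-1} for $N$. Let $\tilde\alpha$ denote the pullback of $\alpha$ to $N$; its locally Hamiltonian flow is $\psi_t \times \id_{T^*S^1}$, which at time $1$ sends $\text{stab}(Y)$ to $\text{stab}(X')$, disjoint from $\text{stab}(X)$. Combined with (Q1), the pair $(\text{stab}(X), \text{stab}(X'))$ thus satisfies the analogs of (P1)--(P2) in $N$ with flux class $\tilde a$. Proposition \ref{prop-exammain}---whose proof only uses compactness of the relevant subsets, not of the ambient manifold---then yields $pb^{\tilde a}(\text{stab}(X), \text{stab}(X')) \geq 1$, and the hypothesis $F_t|_X \leq 0,\ F_t|_{X'} \geq 1$ translates directly into $K|_{\text{stab}(X)} \leq 0,\ K|_{\text{stab}(X')} \geq 1$.

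I would then replay the averaging argument from the proof of Theorem \ref{thm-main}(i) in $N$. For each $T > 0$, let $\tilde\alpha_T = \frac{1}{T}\int_0^T \Phi_t^* \tilde\alpha\, dt$; the estimate $pb^{\tilde a}(\text{stab}(X), \text{stab}(X')) \geq 1$ produces a point $x_T \in N$ with $|\{K, \tilde\alpha_T\}(x_T)| \geq 1$. The crucial observation, which handles the non-compactness of $N$ and of $K$, is that $\sgrad K$ and $\tilde\alpha_T$ are both independent of $r$: the $M$-dynamics of $\Phi_t$ does not feel $r$, and $\tilde\alpha$ is pulled back from $M$. Hence $\{K, \tilde\alpha_T\}$ is $r$-independent, so $x_T$ may be shifted in $r$ so that $K(x_T) = 0$. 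Because $F$ is compactly supported, the $M$-projection of the forward orbit of $x_T$ stays in $\text{supp}_M F$, while on $\Sigma$ the coordinate $r = -F$ is uniformly bounded; so the orbit lies in a compact subset of $\Sigma$. The empirical measures $\mu_{x_T, T}$ are therefore tight, and a subsequential weak limit $\nu$ is a $\Phi_t$-invariant probability supported on $\Sigma$ with $|\langle \tilde a, \rho(\nu, \sgrad K) \rangle| \geq 1$.

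Finally, projecting $\nu$ to $M \times S^1$ and disintegrating along $S^1$ produces a $\phi$-invariant probability $\mu$ on $M$. Using $\tilde\alpha(\sgrad K) = \alpha(\sgrad F_s)$ together with the identification $\nu \leftrightarrow \int_0^1 (\phi_t)_* \mu \otimes \delta_t\, dt$, a short unpacking of definitions matches $\langle \tilde a, \rho(\nu, \sgrad K) \rangle$ with $\langle a, \rho(\mu, \phi) \rangle$ as defined in the theorem, delivering the desired bound. The main obstacle I anticipate is precisely the non-compactness of $N$; I expect it to be overcome cleanly by the $r$-independence of $\{K, \tilde\alpha_T\}$, which collapses the problem to the compact invariant slice $\Sigma \cap ((\text{supp}_M F) \times T^*S^1)$, on which the autonomous argument runs as before.
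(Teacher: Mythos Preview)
Your plan is essentially the paper's proof: pass to the suspension $H(x,r,s)=F(x,s)+r$ on $N$, average the pulled-back form $\beta=\pi^*\alpha$ along the flow, use (Q1) to locate a point where $|\{H,\alpha_T\}|\geq 1$, shift it in the $r$-direction by the translation symmetry, extract an invariant measure via Bogolyubov--Krylov on a compact slice, and then project to $M\times S^1$ and disintegrate to obtain the $\phi$-invariant $\mu$. Your shift onto $\{K=0\}$ in place of the paper's shift onto $\{r=0\}$ plus energy conservation is a cosmetic difference.

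There is one real gap. You claim Proposition~\ref{prop-exammain} transfers to $N$ because its proof ``only uses compactness of the relevant subsets, not of the ambient manifold,'' and then invoke $pb^{\tilde a}(\text{stab}(X),\text{stab}(X'))\geq 1$ to get the point $x_T$. But the proof of Proposition~\ref{prop-exammain} also requires that the locally Hamiltonian flow $\theta_t$ of the chosen representative be defined for $t\in[0,1]$; on the open manifold $N$ this is not automatic for an arbitrary representative of $\tilde a$, and in particular not obviously for $\tilde\alpha_T$. Your $r$-independence observation handles the $r$-direction but not the open $M$-direction. The paper fills this in explicitly (its Step~2): since $\tilde\alpha_T$ is $S_c$-invariant, $\sgrad\tilde\alpha_T$ descends to $N/\Z$; outside the support of $F$ one has $\tilde\alpha_T=\pi^*\alpha$, whose flow is $\psi_t\times\id$, complete by assumption (Q2); hence trajectories on $N/\Z$ cannot escape to infinity and $\theta_t$ lifts to a complete flow on $N$. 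With that established, the rest of your outline (which matches the paper's Steps~3--8) goes through.
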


\medskip
\noindent Let us mention that properties (Q1) and (Q2) hold true in Examples \ref{exam-1}, \ref{exam-2}, \ref{exam-noorb} and in Remark \ref{rem-noncomp}.

\medskip
\noindent The proof of Theorem \ref{thm-sympint-1} starts with the Hamiltonian suspension construction:
We pass to the extended phase space $N$ and look at the {\it autonomous} Hamiltonian flow generated
by $H(x,r,s)= F(x,s) +r$. This flow encodes the original dynamics of $\phi$ on $M$ (cf. the proof of Theorem 1.12 in \cite{BEP}). Then we argue along the lines of the autonomous case considered in Theorem \ref{thm-sympint}. Let us mention however that since $H$ is not compactly supported, Theorem \ref{thm-sympint} is not directly applicable even after the modification described in Remark \ref{rem-noncomp}, which makes
the proof below somewhat more involved.

\medskip
\noindent
\begin{proof} The proof is divided into several steps.
Denote by $\pi: N \to M$ the natural projection and put $\beta:= \pi^*\alpha$.

\medskip
\noindent {\sc Step 1:} Consider a Hamiltonian $H(x,r,s) = F(x,s) +r$ on $N$.
It generates the flow
$$h_t(x(0),r(0),s(0)) = (x(t),r(t),s(t))\;,$$ where
$$x(t)= \phi_{s(0)+t}\phi_{s(0)}^{-1},\; r(t) = r(0)-\int_0^t \frac{\partial F}{\partial s}(x(t),s(t)) dt,\; s(t) = s(0) +t\;.$$ Observe that $h_t$ commutes with the symplectic $\R$-action
$$S_c: N \to N, (x,r,s) \to (x,r+c,s)\;.$$
Fix $T >0$ and put
$$\alpha_T = \frac{1}{T}\cdot \int_0^T h_t^*\beta\;.$$
Observe that
$$S_c^*h_t^*\beta = h_t^*S_c^*\beta = h_t^*\beta$$
for all $c$ and $t$, and hence
\begin{equation}
\label{eq-nonaut-complete}
S_c^*\alpha_T = \alpha_T\;.
\end{equation}

\medskip
\noindent {\sc Step 2:} We claim that the flow  $\sgrad \alpha_T$ is
defined for all times $t \in \R$. Indeed,  by \eqref{eq-nonaut-complete} the vector field
$\sgrad \alpha_T$ descends to the manifold $N':= N/\Z$ where the action of $\Z$ is given
by the integer shifts $S_k, k \in \Z$ along the $r$-direction. Furthermore, for $x$ outside
a sufficiently large compact in $M$ we have that $h_t(x,r,s)= (x,r, s+t)$ and hence
$\alpha_T = \beta = \pi^*\alpha$. It follows, by our assumption on $\alpha$, that the trajectories
of $\sgrad \alpha_T$ cannot escape to infinity in finite time on $N'$. Thus the flow
of  $\sgrad \alpha_T$ is defined for all times on $N'$, and therefore it lifts to a well
defined flow $\theta_t$, $t \in \R$ on $N$. The claim follows.

\medskip
\noindent {\sc Step 3:}
Since $[\alpha_T]= [\beta] \in H^1(N,\R)$,
the flow
$$g_t: = (\psi_t \times \id)^{-1} \theta_t : N \to N$$
is Hamiltonian. By Property (Q1) $g_t (\text{stab}(X)) \cap \text{stab}(Y) \neq \emptyset$ which yields
$$\theta_1(\text{stab}(X)) \cap \text{stab}(X') \neq \emptyset\;.$$
Thus there exists a point $z \in \text{stab}(X)$ such that $\theta_1(z) \in \text{stab}(X')$.
Observe that $H \leq 0$ on $\text{stab}(X)$ and $H \geq 1$ on $\text{stab}(X')$. Since $$1 \leq {H}(\theta_1z)-{H}(z) = \int_0^1 \{H,\alpha_T\}(\theta_tz)\;dt\;,$$
there exists $t' \in [0,1]$ with $$|\{{H},\alpha_T\}(\theta_{t'} z)| \geq 1\;.$$
Put $y=\theta_{t'}z$.

Note that $S_c^*H=H+c$ and $S_c^*\alpha_T = \alpha_T$ by \eqref{eq-nonaut-complete}.
It follows that $S_c^*\{H,\alpha_T\} = \{H,\alpha_T\}$, and hence
$$|\{{H},\alpha_T\}(S_cy)| \geq 1\;\; \forall c \in \R\;.$$
Therefore there exists a point $y_T := (x_T,0, s_T)$ such that
\begin{equation}
\label{eq-nonauton-brack}
|\{{H},\alpha_T\}(y_T)| \geq 1\;.
\end{equation}
As we shall see below, it matters that the $r$-coordinate
of $y_T$ vanishes.

\medskip
\noindent {\sc Step 4:} We claim that all the orbits $\gamma_T := \{h_ty_T\}$, $t \in [0,T]$
lie in some compact $Q \subset N$ for all $T >0$. Denote by $K \subset M \times S^1$ the support
of $F(x,s)$. The set $$X:= (M \setminus K) \times \R \subset N$$
is invariant under $h_t$. Moreover, on $X$ we have that
$\{H,\alpha_T\}(z) = \{r,\beta\} = 0$
which violates \eqref{eq-nonauton-brack}. It follows that $\gamma_T \subset K \times \R$.
Furthermore, write $h_ty_T = (x^t,r^t,s^t)$. By the energy conservation law,
$$F(x^t,s^t) + r^t = F(x_T,s_T) +0\;,$$
which yields an upper bound
$$|r_t| \leq C:= \max F - \min F \;\;\forall t\;.$$
The claim follows with $Q= K \times [-C,C]$.

\medskip
\noindent {\sc Step 5:} Define a measure $\nu_T$ on $N$ by
$$\int G d\nu_T = \frac{1}{T} \int_0^T G(h_t y_T)\; dt\;\; \forall G \in C(N)\;.$$
By Step 4, these measures are supported in the compact subset $Q$. Hence,
after passing to a subsequence $T_k \to +\infty$, they weakly converge to a measure, say
$\nu$ on $N$. The standard Bogolyubov-Krylov argument shows that $\nu$ is $h_t$-invariant.
Furthermore, by \eqref{eq-nonauton-brack}
$$\Big{|} \int \beta(\sgrad H) d\nu_T \Big{|} = |\{H,\alpha_T\}(y_T)| \geq 1\;,$$
and hence
\begin{equation}\label{eq-nonaut-main}
\Big{|} \int \beta(\sgrad H) d\nu \Big{|}  \geq 1\;.
\end{equation}

\medskip
\noindent {\sc Step 6:} Consider the flow
$$g_t: M \times S^1 \to M \times S^1,\; (x,s) \to (\phi_{s+t}\phi_s^{-1}x, s+t)\;.$$
Denote by $\tau: N \to M \times S^1$ the natural projection. Since $\tau h_t = g_t$,
the push-forward measure $\sigma:= \tau_*\nu$ is invariant under $g_t$. Further,
$\beta(\sgrad H) = \alpha (\sgrad F_s)$ is independent on $r$. Therefore
inequality \eqref{eq-nonaut-main} yields
\begin{equation}\label{eq-nonaut-main-1}
\Big{|} \int \alpha(\sgrad F_s) d\sigma \Big{|}  \geq 1\;.
\end{equation}

\medskip
\noindent {\sc Step 7:} Invariant measures of the flow $g_t$ have quite a simple structure. To see this,
introduce diffeomorphisms $A$  and $B$ of $M \times \R$ given by
$$A: (x,s) \mapsto (\phi_sx, s), \;\; B: (x,s) \mapsto (\phi^{-1}x, s+1)\;,$$
and the translation $R_t$ of $M \times \R$,
$$R_t:  (x,s) \mapsto (x,s+t)\;.$$
Let $$\widetilde{g}_t (x,s) = (\phi_{s+t}\phi_s^{-1}x, s+t)$$
be the lift of the flow $g_t$ to the cover $M \times \R$.
Borel probability measures $\sigma$ on $M \times S^1$ are in one to one correspondence
with $R_1$-invariant Borel measures $\widetilde{\sigma}$ on $M \times \R$ satisfying
$\widetilde{\sigma}(M \times [0,1)) =1$. The measure $\sigma$ is $g_t$-invariant
if and only if $\widetilde{\sigma}$ is $\widetilde{g}_t$-invariant.
Observe that $\widetilde{g}_t = AR_tA^{-1}$. Thus every invariant measure  $\widetilde{\sigma}$
of $\widetilde{g}_t$ has the form
$A_*\overline{\mu}$, where $\overline{\mu}$ is an invariant measure of the flow $R_t$.
Note that $\overline{\mu}$ is necessarily of the form $\mu \otimes ds$, where $ds$ is the Lebesgue
measure on $\R$ and $\mu$ a measure on $M$.

The measure $A_*\overline{\mu}$ is $R_1$-invariant if and only
if $$A^{-1}_*R_{1*}A_*\overline{\mu}= \overline{\mu}\;.$$
Since $A^{-1}R_1A = B$, we have that $B_*(\mu \otimes ds) = (\mu \otimes ds)$,
which is equivalent to the fact that $\mu$ is $\phi$-invariant.

Returning back to $M \times S^1$, we conclude that every $g_t$-invariant Borel probability
measure $\sigma$ on $M \times S^1$ satisfies
$$ \int_{M \times S^1} G(x,s)\; d\sigma(x,s) = \int_0^1 \int_M G(\phi_s x,s)\;d\mu(x)ds\;\;\forall G \in C(M \times S^1),$$
where $\mu$ is a $\phi$-invariant Borel probability
measure on $M$.

\medskip
\noindent {\sc Step 8:} Apply the conclusion of Step 7 to the measure $\sigma$ constructed in
Step 6. Inequality \eqref{eq-nonaut-main-1} reads
$$\Big{|} \int_0^1 \int \langle \alpha, \sgrad F_s\rangle (\phi_sx)\; d\mu(x)ds  \Big{|}  \geq 1\;,$$
where the measure $\mu$ on $M$ is $\phi$-invariant. By definition, this means that $|\langle a, \rho(\mu,\phi)\rangle|>1$. This completes the proof.
\end{proof}

\medskip
\noindent
{\bf Acknowledgements.} This note was inspired by Hofer's 2013 Aisenstadt Lectures in the CRM, Montreal.
I am grateful to Michael Entov for comments which led to a drastic simplification of the original proof (based on \cite{Sul}) of the main theorem, as well as to a generalization to the non-autonomous case.
I thank Lev Buhovsky and Helmut Hofer for useful discussions.


\begin{tabular}{l}
Leonid Polterovich\\
School of Mathematical Sciences\\
Tel Aviv University\\
Tel Aviv 69978, Israel\\
polterov@runbox.com\\
\end{tabular}


\begin{thebibliography}{99}

\bibitem{Abouzaid} Abouzaid, M., {\it A geometric criterion for generating the Fukaya category,}
Publ. Math. Inst. Hautes \'{E}tudes Sci. {\bf 112} (2010), 191–-240.

 \bibitem{BPS}  Biran, P., Polterovich, L., Salamon, D., {\it Propagation in Hamiltonian dynamics and relative symplectic homology,} Duke Math. J. {\bf 119} (2003), 65–-118.

\bibitem{BEP} Buhovsky, L., Entov, M., Polterovich, L., {\it Poisson brackets and symplectic invariants}, Selecta Math. (N.S.) {\bf 18} (2012), 89–-157.

    \bibitem{CFS} Cornfeld, I.P., Fomin, S.V., Sinai Ya.G.,
    {\it Ergodic Theory,} Springer, 1982.

     \bibitem{LG} Gatien, D., Lalonde, F., {\it Holomorphic cylinders with Lagrangian boundaries and Hamiltonian dynamics,} Duke Math. J. {\bf 102} (2000), 485–-511.

        \bibitem{EP} Entov, M., Polterovich, L., {\it Rigid subsets of symplectic manifolds,}
 Compos. Math. {\bf 145} (2009), 773–-826.


    \bibitem{Fischer} Fischer, T., {\it Existence, uniqueness, and minimality of the Jordan measure decomposition}, Preprint arXiv:1206.5449, 2012.

\bibitem{F} Floer, A., {\it Morse theory for Lagrangian intersections,}
J. Differential Geom. {\bf 28} (1988), 513–-547.

\bibitem{Lee} Lee, Y.-J., {\it Non-contractible periodic orbits, Gromov invariants, and Floer-theoretic torsions,} Preprint arXiv:math/0308185, 2003.

    \bibitem{Mather} Mather, J., {\it Action minimizing invariant measures for positive definite Lagrangian systems,} Math. Z. {\bf 207} (1991), 169–-207.

\bibitem{MS}  McDuff, D., Salamon, D., \textit{Introduction to Symplectic Topology, second edition},
Oxford University Press, New York, 1998.

\bibitem{Ni}  Niche, C., {\it Non-contractible periodic orbits of Hamiltonian flows on twisted cotangent bundles,} Discrete Contin. Dyn. Syst. {\bf 14} (2006), 617–-630.


\bibitem{Schw} Schwartzman, S., {\it Asymptotic cycles,} Ann. of Math. {\bf 66} (1957), 270–-284.


\bibitem{Seidel} Seidel, P., {\it Disjoinable Lagrangian spheres and dilations,}
Preprint  arXiv:1307.4819, 2013.

\bibitem{Sul}  Sullivan, D., {\it Cycles for the dynamical study of foliated manifolds and complex manifolds,} Invent. Math. {\bf 36} (1976), 225–-255.

    \bibitem{W} Weber, J., {\it Noncontractible periodic orbits in cotangent bundles and Floer homology},
Duke Math. J. {\bf 133} (2006), 527–-568.

\end{thebibliography}
\end{document}